\documentclass{amsart}

\addtolength{\textwidth}{1.5cm}
\addtolength{\hoffset}{-0.75cm}
\addtolength{\textheight}{2cm}
\addtolength{\voffset}{-1cm}

\usepackage{color}
\usepackage[
frame,cmtip,arrow,matrix,line,graph]{xy}
\definecolor{darkgreen}{RGB}{47,139,79}
\definecolor{darkblue}{RGB}{36,24,130}

\usepackage{hyperref}
\hypersetup{
    colorlinks=true, 
    linktoc=all,     
    linktocpage,
    citecolor=darkgreen,
    filecolor=black,
    linkcolor=darkblue,
    urlcolor=black
}

\usepackage{amsmath}
\usepackage{amscd}
\usepackage{amssymb}
\usepackage{graphicx}
\usepackage[all]{xypic}

\usepackage{tikz}
\usetikzlibrary{calc}
\usetikzlibrary{patterns}
\usetikzlibrary{matrix}
\usepgflibrary{arrows}
\usetikzlibrary{arrows}
\usetikzlibrary{decorations.pathreplacing}
\usetikzlibrary{decorations.pathmorphing}
\usepackage{enumerate}
\usepackage{verbatim}


\numberwithin{equation}{section}



\theoremstyle{plain} 
\newtheorem{theorem}[equation]{Theorem}
\newtheorem{lemma}[equation]{Lemma}

\newtheorem{question}[equation]{Question}
\newtheorem{proposition}[equation]{Proposition}

\newtheorem{Th}{Theorem}

\theoremstyle{definition}
\newtheorem{definition}[equation]{Definition}

\newtheorem{examples}[equation]{Examples}

\newtheorem{remark}[equation]{Remark}
\newtheorem*{remark*}{Remark}





\newcommand{\del}{\partial}


\def\doCal#1{%
\ifx#1\doAllCalEnd\def\doAllCal{\relax}\else%
 \expandafter\edef\csname#1cal\endcsname{{\noexpand\mathcal #1}}\fi}
\def\doAllCal#1{\doCal#1\doAllCal}
\doAllCal ABCDEFGHIJHLMNOPQRSTUVWXYZabcdefghijklmnopqrstuvwxyz\doAllCalEnd

\def\doBar#1{%
\ifx#1\doAllBarEnd\def\doAllBar{\relax}\else%
 \expandafter\edef\csname#1bar\endcsname{{\noexpand\overline{#1}}}\fi}
\def\doAllBar#1{\doBar#1\doAllBar}
\doAllBar ABCDEFGHIJHLMNOPQRSTUVWXYZabcdefghijhlmnopqrstuvwxyz\doAllBarEnd
%

\def\doWiggle#1{%
\ifx#1\doAllWiggleEnd\def\doAllWiggle{\relax}\else%
 \expandafter\edef\csname#1wiggle\endcsname{{\noexpand\tilde{#1}}}\fi}
\def\doAllWiggle#1{\doWiggle#1\doAllWiggle}
\doAllWiggle ABCDEFGHIJHLMNOPQRSTUVWXYZabcdefghijklmnopqrstuvwxyz\doAllWiggleEnd



\newcommand{\note}[1]
{{\bf [#1]}}


\DeclareMathOperator{\Aut}{Aut}



\DeclareMathOperator{\Hom}{Hom}



\DeclareMathOperator{\Link}{Link}




\def\Math#1{\def\MathString{#1}\futurelet\MathDelim\MathChoose}
\def\MathChoose{\ifmmode\let\MathDo\MathString%
              \else\let\MathDo\MathSkip\fi%
              \MathDo}
\def\MathSkip{\ifx\MathDelim/\def\MathDo{$\MathString$\EatOne}%
              \else\def\MathDo{$\MathString$}\fi%
              \MathDo}


\def\Text#1{\def\TextString{#1}\futurelet\TextDelim\TextSkip}
\def\TextSkip{\ifx\TextDelim/\def\TextDo{\TextString\EatOne}%
              \else\let\TextDo\TextString\fi%
              \TextDo}
\def\EatOne#1{}


\def\SkipToEndScan#1\EndScan{}
\def\Scan#1#2#3{\ifx#1#2#3\expandafter\SkipToEndScan\fi\Scan#1}
\def\Upper#1{%
\Scan#1aAbBcCdDeEfFgGhHiIjJkKlLmMnNoOpPqQrRsStTuUvVwWxXyYzZ#1#1\EndScan}
\def\Phrase#1 #2/#3/#4=#5 #6/#7/#8.{%
\expandafter\edef\csname#2#3\endcsname{\noexpand\Text{#6#7}}
\expandafter\edef\csname\Upper#2#3\endcsname{\noexpand\Text{\Upper#6#7}}
\expandafter\edef\csname#1#2#3\endcsname{\noexpand\Text{#5 #6#7}}
\expandafter\edef\csname\Upper#1#2#3\endcsname{\noexpand\Text{\Upper#5 #6#7}}
\expandafter\edef\csname#2#4\endcsname{\noexpand\Text{#6#8}}
\expandafter\edef\csname\Upper#2#4\endcsname{\noexpand\Text{\Upper#6#8}}
}
%
%



\newcommand{\pdash}{$p$\kern1.3pt-}


\Phrase a bad//s=a bad//s.


\newcommand{\double}[1]{D(#1)}

\newcommand{\Rdouble}[1]{D^r(#1)}

\newcommand{\decouple}[1]{\delta #1}



\newcommand{\s}{\sigma}
\newcommand{\PP}{\mathcal{P}}
\newcommand{\rar}{\longrightarrow}
\newcommand{\op}{\oplus}

\begin{document}

\title{The double of a simplicial complex}

\author{Kathryn Lesh}
\author{Bridget Schreiner}
\author{Nathalie Wahl}

\date{\today}

\maketitle

\begin{abstract}
  We introduce the notion of doubling and $r$-tupling for simplicial complexes, a notion reminiscent to that of matching complexes in graph theory. 
We prove a connectivity result for such complexes and relate $r$-tupling to stabilizing $r$ times faster in homological stability. 
  \end{abstract}

\section{Introduction}

Given a simplicial complex~$X$, one can define a new simplicial complex, its {\em double} $\double{X}$, whose vertices are the edges of $X$, and where a collection of $p+1$ edges form a $p$-simplex in $\double{X}$ if they are disjoint, and together form a $2p+1$-simplex of $X$. One can likewise define the $r$-tupling $\Rdouble{X}$ of $X$ for any natural number, where the vertices of $\Rdouble{X}$ are now the $(r-1)$-simplices of $X$. (See Definition~\ref{def:rdouble}.)


Recall that a simplicial complex is called weakly Cohen-Macaulay (wCM for short) of dimension $n$ if it is $(n-1)$--connected, and the link of any $p$--simplex is $(n-p-2)$--connected.
The goal of this note is to prove the following result: 

\begin{Th}\label{thm:kdouble}
Suppose $X$ is wCM of dimension $n$. Then its $r$-tupling $\Rdouble{X}$ is wCM of dimension $\left\lfloor \frac{n-r+1}{r+1}\right\rfloor$. 
\end{Th}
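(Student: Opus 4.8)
The plan is to isolate a single connectivity statement, prove it by induction, and read off both clauses of weak Cohen--Macaulayness from it. Write $m(n)=\left\lfloor\frac{n-r+1}{r+1}\right\rfloor$ for the target dimension. The first step is to compute the links of $\Rdouble{X}$. A $p$--simplex $\sigma$ of $\Rdouble{X}$ is a set of $p+1$ pairwise disjoint $(r-1)$--simplices of $X$ whose union is a simplex $\tau$ of $X$ of dimension $r(p+1)-1$, and I claim there is a natural isomorphism
\[
\Link_{\Rdouble{X}}(\sigma)\;\cong\;\Rdouble{\Link_X(\tau)}.
\]
Indeed, an $(r-1)$--simplex of $X$ spans a simplex together with $\sigma$ exactly when it is disjoint from $\tau$ and lies in $\Link_X(\tau)$, and the disjointness-and-union conditions defining higher simplices transport verbatim between the two sides. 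I would combine this with the standard fact that the link of a $d$--simplex in a \wCM{} complex of dimension $n$ is again \wCM, of dimension $n-d-1$; applied to $\tau$ this says $\Link_X(\tau)$ is \wCM{} of dimension $n-r(p+1)$.

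These two observations reduce the theorem to the following connectivity lemma: \emph{if $Y$ is \wCM{} of dimension $N$, then $\Rdouble{Y}$ is $(m(N)-1)$--connected.} The $(m(n)-1)$--connectivity of $\Rdouble{X}$ is this lemma for $Y=X$. For the link clause, the lemma applied to $Y=\Link_X(\tau)$ shows $\Link_{\Rdouble{X}}(\sigma)=\Rdouble{\Link_X(\tau)}$ is $\big(m(n-r(p+1))-1\big)$--connected, and this suffices because
\[
m\big(n-r(p+1)\big)=\Big\lfloor\tfrac{n-r+1}{r+1}+\tfrac{p+1}{r+1}\Big\rfloor-(p+1)\;\geq\;m(n)-(p+1),
\]
using $\tfrac{r(p+1)}{r+1}=(p+1)-\tfrac{p+1}{r+1}$ and $\lfloor a+t\rfloor\geq\lfloor a\rfloor$ for $t\geq 0$; subtracting $1$ gives the required $(m(n)-p-2)$--connectivity. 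I would record this floor estimate as a one-line sublemma.

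The heart of the argument is the connectivity lemma, which I would prove by induction on $N$. Given a map $f\colon S^{k}\to\Rdouble{Y}$ with $k\leq m(N)-1$, made simplicial on a triangulation of $S^{k}$, the aim is to nullhomotope it. The image uses only finitely many $(r-1)$--simplices of $Y$, hence finitely many vertices of $Y$, so high connectivity of $Y$ leaves room to do surgery: I would run the standard link argument, declaring certain simplices of the domain \emph{bad} and rebuilding $f$ across their stars. The crucial input for the inductive step is exactly the link computation above, since the links of $\Rdouble{Y}$ that arise are of the form $\Rdouble{\Link_Y(\tau)}$ with $\Link_Y(\tau)$ \wCM{} of strictly smaller dimension, hence connected enough by the inductive hypothesis to perform the surgeries. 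The base cases $m(N)\leq 0$ reduce to producing a single $(r-1)$--simplex of $Y$, which exists because $Y$ is \wCM{} of dimension $N\geq r-1$.

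I expect the surgery step to be the main obstacle. The content of the bound $m(N)$ is genuinely that of a matching complex: when $Y=\Delta^{N}$ the complex $\Rdouble{Y}$ is the complex of $r$--element matchings on $N+1$ vertices, and for $r=2$ the bound specializes to the classical connectivity of matching complexes, with the denominator $r+1$ reflecting that coning off one vertex of $\Rdouble{Y}$ spends $r$ vertices of $Y$ while advancing the dimension by one. The difficulty is to choose the notion of \emph{bad} simplex and to cost each surgery move against the connectivity of $Y$ and of its links so that the induction closes with the sharp value $m(N)$ rather than a weaker linear bound; making this bookkeeping match the indices coming out of the link estimate is where essentially all the work lies.
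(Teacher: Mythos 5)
Your reduction of the theorem to a single connectivity lemma is sound and coincides with the paper's own strategy: the link isomorphism $\Link_{\Rdouble{X}}(\s)\cong\Rdouble{\Link_X(\decouple{\s})}$ is exactly the paper's Lemma~\ref{lem:doublelink}, and your floor estimate correctly converts the connectivity statement into the wCM statement. The problem is that you have not actually proved the connectivity lemma, which is the entire mathematical content of the theorem. You propose to prove it by induction on $N$ via a ``standard'' bad-simplex surgery argument, and then candidly remark that choosing the notion of bad simplex and making the bookkeeping close at the sharp value $m(N)$ is ``where essentially all the work lies.'' That is precisely the gap. Note that the special case $Y=\Delta^N$ of your lemma \emph{is} the sharp connectivity of the $r$--hypergraph matching complex $\mathcal M_{N+1}(r)$; the bound with denominator $r+1$ is a theorem of Athanasiadis proved by shellability, and the more naive link-based inductions in the literature (e.g.\ \cite{BLVZ}) only yield the weaker denominator $2r-1$. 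A direct surgery induction in $\Rdouble{Y}$ using the links $\Rdouble{\Link_Y(\tau)}$ loses too much per step to recover the $r+1$ slope, so there is real reason to expect your induction not to close as stated rather than merely being tedious.

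The idea you are missing is the one that lets the paper \emph{import} the matching-complex theorem instead of reproving it. Given $f\colon S^k\to\Rdouble{X}$, the paper first transports $f$ into the poset $X_{(k+2)r}$ of simplices of $X$ with at least $(k+2)r$ vertices; this poset is $(n-(k+2)r)$--connected (and in fact wCM, by Lemma~\ref{lem:wCM}), so the sphere bounds a disc there, which can moreover be taken simplexwise injective by \cite[Thm~2.4]{Galatius-RW}. One then converts the disc back into $\Rdouble{X}$ cell by cell: over an $m$--simplex of the disc with top vertex a large simplex $\alpha_m$ of $X$, the extension problem lives in $\Rdouble{\alpha_m}\cong\Rdouble{\Delta^{\dim\alpha_m}}$, whose connectivity is supplied by Theorem~\ref{thm:Delta} (Athanasiadis). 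So the global problem is reduced to the already-known local problem on a single simplex. Without either this two-step detour through $X_{(k+2)r}$ or an independent proof of the sharp hypergraph matching bound, your argument does not go through.
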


The doubling of a simplicial complex is closely related to the notion of a {\em matching complex} in graph theory. Recall that the matching complex of a graph $\Gamma$ is the simplicial complex whose vertices are the edges of $\Gamma$, and higher simplices the disjoint collections; a top simplex in that simplicial complex is a {\em complete matching} of the vertices in the graph. 
When $X=\Delta^n$ is the $n$-simplex,  the double $\double{\Delta^n}$ identifies with the matching complex of the complete graph on $n+1$ vertices. More generally, the $r$-tupling $\Rdouble{\Delta^n}$ identifies with what is known as the matching complex of the complete $r$-hypergraph on $n+1$-vertices, see e.g.~\cite{Athanasiadis}. 

The connectivity properties of matching complexes of the complete graph and $r$-hypergraphs  have been studied by several authors, and 
the above result can be seen as a generalization of \cite[Cor 4.2]{BLVZ} (when $r=2$) and \cite[Thm~1.2]{Athanasiadis} (for all $r$'s) that give the case $X=\Delta^n$ in our theorem via the identification just explained. Our proof in fact relies on these earlier results. When $r=2$, the result is known to be sharp for $\Delta^n$, see  \cite[Thm~1.3, Thm~1.6]{ShaWac}, building on \cite{Bouc}. 


\subsection*{Relationship to homological stability}

Our study of $r$-tupling of simplicial complexes arose in homological stability, in questions related to {\em destabilization complexes}. Recall that 
a sequence of groups $$G_1\to \dots \to G_n\to G_{n+1}\to \dots$$ satisfies homological stability if the map $H_i(G_n)\to H_i(G_{n+1})$ is an isomorphism when $n$ is large enough.
Stability holds for example for $G_n=GL_n(R)$ for many rings~$R$, or $G_n=\Sigma_n$ the symmetric group on $n$ letters, or $G_n=\Aut(F_n)$ the automorphism group of the free group of rank~$n$, see e.g.;~\cite{RW-Wahl} or \cite{WahlICM} for further examples, references and an introduction to the subject.

Homological stability results for families of groups are typically proved using the action of the groups on certain simplicial spaces, or simplicial complexes.
As shown in \cite{RW-Wahl}, when the groups $\{G_n\}$ together form a braided monoidal groupoid, as is most often the case in examples, there is a family of semi-simplicial sets $W_n$, called the destabilization complexes, whose connectivity properties govern homological stability in the sense that,  if $W_n$ is $\left(\frac{n-a}{k}\right)$--connected for all~$n$ and some $k\ge 2$, then the map
$H_i(G_n)\to H_i(G_{n+1})$ is an isomorphism for all $i\le \frac{n-a+1}{k}$. See Theorems~A and~3.1 in \cite{RW-Wahl}. 

Conjecture C in the same paper states that a form of converse of this result should hold, in the sense that if homological stability holds in such a fashion, then, at least homologically, the destabilization complexes $W_n$ should be highly connected. As we will explain now, Theorem~\ref{thm:kdouble} can be interpreted as proving this conjecture in the special case of ``fast stabilization''.

We will use the following observation: if a sequence $G_1\to G_2\to G_3\to \dots$ satisfies homological stability, so does the sequence $G_2\to G_4\to G_6\to \dots$, or more generally $G_r\to G_{2r}\to G_{3r}\to \dots$ for any $r\ge 1$.
The above conjecture, if true, would imply the following: whenever the destabilization complex $W_n$ associated to the standard ``+1" stabilization is highly connected, so should the destabilization complex $W_n^r$ associated to the ``+$r$'' stabilization for any $r$. A consequence of Theorem~\ref{thm:kdouble} is that, under mild assumptions,  this is indeed the case. See Proposition~\ref{prop:Sr} for a precise statement. To be able to apply our main result, we will use that, in good conditions,  the connectivity of the semi-simplicial sets $W_n$ and $W_n^r$ is determined by that of their underlying simplicial complexes $S_n$ and $S_n^r$, see Definition~\ref{def:WS} and \cite[Thm 2.10]{RW-Wahl}.   

Note that, as detailed in Remark~\ref{rem:ranges}, the connectivity bounds for $W^r_n$ we obtain this way do not give optimal stability results in general. For symmetric groups, taking $G_n=\Sigma_n$, the simplicial complex $S_n$ identifies with the $n$-simplex $\Delta^n$. In that particular case, we know that the connectivity bounds we obtain for $W_n^r$ are optimal, but it also happens in this case that the resulting stability slope remains optimal for any $r$. See the remark for more details.

\medskip

The complexes $W_n^r$ and $S_n^r$ come with an action of the group $G_{rn}$. Hence their first non-trivial homology defines a family of representations of the groups. In the particular case of the symmetric groups, where $S_n=\Delta^n$, Bouc identifies the first non-trivial homology group of the double $D(S_n)$, as symmetric groups representations, as certain direct sums of Specht modules, see \cite[Sec 3.2]{Bouc}.

\medskip

A variant of the $r$-tupling construction has arisen in current work by Belmont, Quigley, and Vogeli, who study an equivariant version of homological stability in the formalism of \cite{RW-Wahl}. In their case, for a group $G$, the $G$-equivariant analogue $W_n(G)$ of the destabilization complex $W_n$ has $p$-simplices given by ordered $(k_0 + ... + k_p)$-tuples of distinct vertices in the non-equivariant $W_n$, where $k_i = |G/H_i|$ for some subgroup $H_i \leq G$. They show, using a similar argument to the one given here, that high connectivity of the complexes $W_n$ implies high connectivity of the complexes $W_n(G)$.

Using the $E_k$--cells formalism of \cite{GKRW}, homological stability can be proved using the splitting complex instead of the destabilization complex. Connectivity properties of the destabilization complex and splitting complex are essentially equivalent by \cite[Prop 7.1]{RWcells}.
It should thus be possible to formulate and prove a version of our main result aimed at splitting complexes.

\medskip

\subsection*{Organization of the paper:} We define the doubling and $r$-tupling of a simplicial complex in Section~\ref{sec:matching}, and relate that notion to that of matching complexes. 
The proof of Theorem~\ref{thm:kdouble} is given in Section~\ref{sec:proof}. Finally Section~\ref{sec:stability} details the relationship to homological stability. 

\subsection*{Acknowledgments} This project started at the Women in Topology workshop held at MSRI in 2017. We thank the organizers of the program for setting this up and MSRI for hosting us for the week. We also thank the Newton Institute in Cambridge for support and hospitality both during the program Homotopy
Harnessing Higher Structures in 2018 and  Equivariant Homotopy Theory in Context in 2025, where work on this paper was undertaken (EPSRC grants EP/K032208/1, EP/R014604/1, and EP/Z000580/1).  The last author was also supported by the Danish National Research Foundation through the Copenhagen Centre for Geometry and
Topology (DNRF151).



\section{Doubling, $r$-tupling, and matching complexes}\label{sec:matching}

Recall that a simplicial complex $X=(X_0,\PP)$ is the data of a set of vertices $X_0$ together with a collection $\PP$ of subsets of $X_0$ including all the singletons and closed under taking subsets. A  $p$-simplex of $X$ is then a subset $\tau=\{x_0,\dots,x_p\}\in \PP$ of cardinality $p+1$.  We denote by $X_p$ the collection of $p$-simplices of $X$. 


\begin{definition}\label{def:rdouble}
  The {\em $r$-tupling} of a simplicial complex $X=(X_0,\PP)$ is the simplicial complex $\Rdouble{X}=(X_{r-1},\PP^r)$ with vertices the $(r-1)$--simplices of $X$ and $p$-simplices the collections $\{\tau_0,\dots,\tau_p\}$ with the property that  $\bigcup_{i=0}^p\tau_i\in X_{(p+1)r-1}$.
  When $r=2$, we call $D^2(X)=\double X$ the {\em double} of $X$. 
\end{definition}

For example, the double $\double{\Delta^2}$ has three vertices and no edges, while  $\double{\Delta^3}$ has four vertices and two (disjoint) edges (see Figure~\ref{fig:DD}).

\begin{figure}
\includegraphics[width=0.6\textwidth]{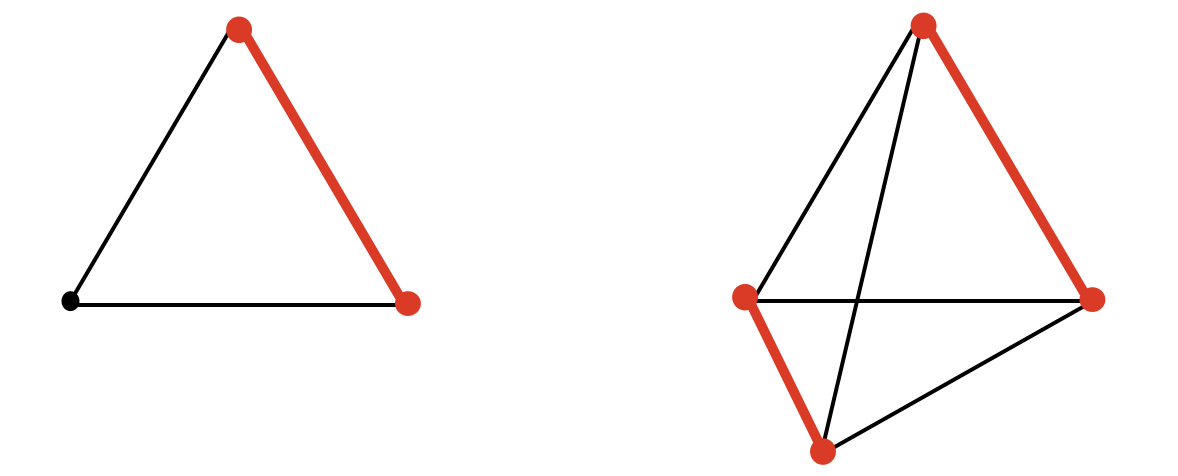}
\caption{Vertex of $D(\Delta^2)$ and edge of $D(\Delta^3)$}\label{fig:DD}
\end{figure}

\medskip

The {\em matching complex} of a graph $\Gamma$ is the simplicial complex $\mathcal{M}(\Gamma)$ with vertices the edges of $\Gamma$, and $p$--simplices the collections $\{e_0.\dots,e_p\}$ of pairwise disjoint edges. (See eg., \cite[Def 3.4]{BFMWZ}.)

Matching complexes were first introduced by Bouc \cite{Bouc} to study posets of subgroups. The poset of simplices of the double $\double{\Delta^n}$ identifies with the poset of elementary abelian $2$-subgroups of the symmetric group $\Sigma_{n+1}$ that are generated by transpositions.
Bouc began the  study of the connectivity properties of   $\double{\Delta^n}$ as well as the $\Sigma_{n+1}$-representation defined by its first non-trivial homology group. 

Let $K_{n+1}$ be the complete graph on $n$ vertices. Note that there is an isomorphism $\double{\Delta^n}\cong \mathcal{M}(K_{n+1})$. Indeed, 1-skeleton of $\Delta^n$ is the complete graph $K_{n+1}$, giving an isomorphism on the set of vertices. And higher simplices are defined in the same way in both cases, since any subset of vertices defines a simplex in $\Delta^n$.

In \cite{Athanasiadis}, Athanasiadis considers the more general {\em $r$--hypergraph matching complexes $\mathcal M_n(r)$}, where $\mathcal M_n(2)=\mathcal{M}(K_n)$ is the matching complex of the complete graph, and $\mathcal M_n(r)$ is defined more generally as the simplicial complex whose vertices are the subsets of $[n]=\{1,\dots,n\}$ of cardinality $r$, and simplices the collections of pairwise disjoint such. We have the more general identification $\Rdouble{\Delta^n}\cong \mathcal{M}_{n+1}(r)$. 

\medskip

Recall that simplicial complex $X$ is called {\em Cohen-Macaulay} (abbreviated {\em CM}) {\em of dimension $n$} if it is of dimension $n$, and the link of any $p$-simplex in $X$ is $(n-p-2)$--connected. We say that $X$ is {\em weakly} Cohen-Macaulay (or {\em wCM}) of dimension $n$ if only the connectivity condition for the links is satisfied. This is equivalent to requiring that the $n$-skeleton of $X$ is CM of dimension $n$ since any wCM complex of dimension $n$ has actual dimension at least $n$ because the links of $(n-1)$--simplices in such a complex are required to be non-empty. 

\medskip

It was shown in \cite{BLVZ} that $\mathcal M_n(r)$ is wCM of dimension $\nu(r)=\left\lfloor\frac{n-2}{2r-1}\right\rfloor$, while in 
\cite{Athanasiadis}, it is shown that its $\mu_n(r)$--skeleton is shellable for
$$\mu_n(r)=\left\lfloor\frac{n-r}{r+1}\right\rfloor,$$
where {\em shellable} is a condition known to be stronger than CM, see e.g.~\cite[App.]{Bjorner}. 
When $r=2$, the two bounds agree, but for $r>2$, the second bound is better.

In particular, it follows that

\begin{theorem}\label{thm:Delta}\cite{Athanasiadis}
$\Rdouble{\Delta^n}$ is wCM of dimension $\left\lfloor\displaystyle\frac{n+1-r}{r+1}\right\rfloor$. 
\end{theorem}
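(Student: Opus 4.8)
The plan is to obtain the statement by transporting the two cited results on matching complexes through the identification $\Rdouble{\Delta^n}\cong\mathcal{M}_{n+1}(r)$ recorded above. First I would rewrite the target dimension in the language of Athanasiadis: substituting $n+1$ for $n$ in $\mu_n(r)=\lfloor\frac{n-r}{r+1}\rfloor$ gives $\mu_{n+1}(r)=\lfloor\frac{n+1-r}{r+1}\rfloor$, which is precisely the claimed bound. So it is enough to show that $\mathcal{M}_{n+1}(r)$ is wCM of dimension $d:=\mu_{n+1}(r)$.

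I would then feed in Athanasiadis' shellability result. Applied with $n+1$ in place of $n$, it says that the $d$-skeleton of $\mathcal{M}_{n+1}(r)$ is shellable. Since shellability of a complex of dimension $d$ implies it is CM of dimension $d$ (\cite[App.]{Bjorner}), this $d$-skeleton is CM of dimension $d$; concretely, it is $(d-1)$-connected and the link in it of every $p$-simplex is $(d-p-2)$-connected. For $r=2$ one could equally invoke \cite{BLVZ} directly, since there $\nu(r)=\mu_n(r)$.

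The only genuine step is to pass from ``the $d$-skeleton is CM of dimension $d$'' to ``the full complex is wCM of dimension $d$'', which is the equivalence recalled before the statement. Writing $X=\mathcal{M}_{n+1}(r)$ and $X^{(d)}$ for its $d$-skeleton, the point is that for a $p$-simplex $\sigma$ the link $\Link_{X^{(d)}}(\sigma)$ is exactly the $(d-p-1)$-skeleton of $\Link_X(\sigma)$: a face $\tau$ of $\Link_X(\sigma)$ survives in $X^{(d)}$ if and only if $\dim(\sigma\cup\tau)\le d$, i.e. $\dim\tau\le d-p-1$. Because $(d-p-2)$-connectivity is detected on the $(d-p-1)$-skeleton, the CM link conditions for $X^{(d)}$ are equivalent to the wCM link conditions for $X$, and the theorem follows through the identification $\Rdouble{\Delta^n}\cong X$.

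I expect this last comparison to be the main obstacle, modest as it is: one has to keep straight that taking links and taking skeleta interact correctly (the link in the truncation is the truncation of the link, with the dimension shifted by $p+1$), and that passing to a skeleton of the right dimension leaves the relevant connectivity bounds unchanged. The remaining content is bookkeeping with the floor function and handling the small-$n$ regime, where $d\le 0$ and the link conditions become vacuous or reduce to nonemptiness.
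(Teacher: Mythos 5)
Your proposal is correct and follows essentially the same route as the paper: identify $\Rdouble{\Delta^n}$ with $\mathcal{M}_{n+1}(r)$, invoke Athanasiadis' shellability of the $\mu_{n+1}(r)$-skeleton, use that shellable implies CM, and then pass to wCM for the full complex via the skeleton/link equivalence recalled before the statement. The only difference is that you spell out the compatibility of links with skeleta explicitly, which the paper leaves implicit; this is a harmless (and welcome) elaboration, not a new idea.
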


\begin{proof}
We have that  $\Rdouble{\Delta^n}$  is isomorphic to   $\mathcal M_{n+1}(r)$, which is wCM of dimension $\lfloor\frac{n+1-r}{r+1}\rfloor$, since its skeleton of that dimension is shellable by \cite[Thm~1.2]{Athanasiadis}, and shellable implies (homotopy) CM, see e.g.~\cite[App.]{Bjorner}, and hence wCM of dimension equal to the dimension of the skeleton.  
  \end{proof}

Note that the non-trivial homology groups of the complexes $\mathcal M_n(r)$ have been studied as symmetric groups representations, see e.g.;~\cite{Bouc}, \cite{Shareshian-Wachs} and \cite{Wachs}, also for the relationship to other complexes associated to groups.

\section{Deducing the connectivity of $\Rdouble{X}$ from that of $\Rdouble{\Delta^n}$}\label{sec:proof}

Our proof of Theorem~\ref{thm:kdouble} is inspired by the proof of Theorem~3.6 in
\cite{Hatcher-Wahl}, and we will use as input the known connectivity of $\Rdouble{\Delta^n}$.

For a simplicial complex $X$, let $sX$ denote its first barycentric subdivision. 
As in \cite{Hatcher-Wahl}, we denote by $X_m$ the subposet of  $sX$ consisting of all the simplices of $X$ with at least $m$ vertices (i.e.~of dimension at least $m-1$). 
From \cite[Lem 3.8]{Hatcher-Wahl}, we know that $X_m$ is at least $(n-m)$--connected if $X$ is wCM of dimension $n$. The idea of the proof is to use $X_{(k+2)r}$ as an intermediate space to show that the $r$-tupling of $X$ is $k$--connected, because locally, in $X_{(k+2)r}$, we always have a simplex of $X$ with at least $r(k+2)$ vertices, which corresponds to a simplex of $\Rdouble X$ of dimension at least $k+1$. And $(k+1)$--simplices is exactly what is needed to fill in a $k$-sphere with a disc.

We will consider $X_m$ as  a simplicial complex with vertices the simplices of $X$ of dimension at least $m-1$, and simplices the chains of such. 
And we start with an enhancement of the connectivity result of \cite{Hatcher-Wahl} just mentioned, that promotes ``connectivity" to ``wCM".

\begin{lemma}\label{lem:wCM}
Suppose $X$ is wCM of dimension $n$. Then $X_m$ is wCM of dimension $n-m+1$. 
\end{lemma}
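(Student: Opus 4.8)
The plan is to exploit that $X_m$ is, by construction, the order complex of the poset $P$ of simplices of $X$ of dimension at least $m-1$ (ordered by inclusion), and to verify the wCM condition by computing the connectivity of the link of an arbitrary simplex. Being wCM of dimension $n-m+1$ asks precisely that the link of every $p$-simplex be $(n-m-p-1)$-connected; the global $(n-m)$-connectivity that some formulations of wCM also require is already supplied by \cite[Lem~3.8]{Hatcher-Wahl} applied to $X$ itself. So the whole content of the lemma is this link estimate.

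A $p$-simplex of $X_m$ is a chain $c=(\sigma_0\subsetneq\dots\subsetneq\sigma_p)$ with $d_i:=\dim\sigma_i\ge m-1$. Using the standard decomposition of the link of a chain in an order complex, I would write, treating each poset as its order complex,
$$\Link_{X_m}(c)\;\cong\;P_{<\sigma_0}\join(\sigma_0,\sigma_1)\join\dots\join(\sigma_{p-1},\sigma_p)\join P_{>\sigma_p},$$
a join of $p+2$ factors, where $(\sigma_{j-1},\sigma_j)$ is the open interval in the face poset and $P_{<\sigma_0}$, $P_{>\sigma_p}$ collect the elements of $P$ strictly below $\sigma_0$, respectively strictly above $\sigma_p$. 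I would then identify the three types of factor. The top factor $P_{>\sigma_p}$ is the barycentric subdivision of $\Link_X(\sigma_p)$ (via $\tau\mapsto\tau\setminus\sigma_p$), hence $(n-d_p-2)$-connected since $X$ is wCM of dimension $n$. Each middle factor $(\sigma_{j-1},\sigma_j)$ is the poset of nonempty proper subsets of the $(d_j-d_{j-1})$-element set $\sigma_j\setminus\sigma_{j-1}$, so its order complex is $\del\Delta^{d_j-d_{j-1}-1}\cong S^{d_j-d_{j-1}-2}$, which is $(d_j-d_{j-1}-3)$-connected. Finally, the bottom factor $P_{<\sigma_0}$ is exactly the truncated face poset $(\del\sigma_0)_m$ of the boundary of $\sigma_0$; since $\del\sigma_0\cong\del\Delta^{d_0}\cong S^{d_0-1}$ is wCM of dimension $d_0-1$, reapplying \cite[Lem~3.8]{Hatcher-Wahl} shows it is $(d_0-m-1)$-connected.

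With these three estimates, the conclusion follows from the join inequality $\operatorname{conn}(A_1\join\dots\join A_k)\ge\sum_i\operatorname{conn}(A_i)+2(k-1)$ with $k=p+2$: the sum over the middle factors telescopes, cancelling all the $d_i$, and the total connectivity comes out to exactly $n-m-p-1$, as needed. The bookkeeping also absorbs the degenerate cases ($d_0=m-1$, giving $P_{<\sigma_0}=\emptyset$, or $d_j-d_{j-1}=1$, giving an empty interval) under the convention that the empty complex is $(-2)$-connected. I expect the main obstacle to be the correct handling of the bottom factor: one must recognize $P_{<\sigma_0}$ as $(\del\Delta^{d_0})_m$ so that \cite[Lem~3.8]{Hatcher-Wahl} can be invoked on the wCM sphere $\del\Delta^{d_0}$ of dimension $d_0-1$, and pinning down the index $d_0-m-1$ precisely is exactly what makes the final arithmetic close.
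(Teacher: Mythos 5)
Your proposal is correct and follows essentially the same route as the paper's proof: the same down/middle/up decomposition of the link of a chain, the same identifications (truncated boundary of $\sigma_0$, open intervals as boundaries of simplices, subdivided link of $\sigma_p$), the same connectivity estimates, and the same join-formula bookkeeping yielding $n-m-p-1$. The only cosmetic difference is that you invoke \cite[Lem~3.8]{Hatcher-Wahl} explicitly for the bottom factor where the paper just cites that $\del\Delta^{m_0-1}$ is CM; these amount to the same thing.
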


\begin{proof}
  We know from  \cite[Lem 3.8]{Hatcher-Wahl} that $X_m$ is $(n-m)$--connected, so we are left to show that the link of a  $p$-simplex in $X_m$ is at least $(n-m-p-1)$--connected for any $p\ge 0$.

  A $p$--simplex of $X_m$  is a simplex $\s_0<\dots<\s_p$ of the first barycentric subdivision of~$X$ with $\s_0$ at least an $(m-1)$--simplex of $X$. Suppose that $\s_p$ is a $q$-simplex, where $q\ge p+m-1$. 
  The link of such a simplex is the join $L_d *L_m* L_u$ with
  \begin{itemize}
  \item[-] the ``down-link'' $L_d$: the subposet of $X_m$ of simplices $\tau$ that are faces of $\s_0$,
  \item[-]   the ``middle-link'' $L_m$: the subposet of simplices $\tau$ with $\s_i<\tau<\s_{i+1}$, 
 \item[-]  the ``up-link'' $L_u$: the subposet of simplices $\tau$ that contain $\s_p$ as a face. 
\end{itemize}
 
  Write $m_i$ for the number of vertices of $\s_i$. Then $m\le m_0<\dots<m_p$. 

The down-link $L_d$ identifies with $(\del \Delta^{m_0-1})_m$, which is $(m_0-m-2)$--connected as $\del \Delta^{m_0-1}$ is CM of dimension $m_0-2$. 

The middle-link $L_m=L_1*\dots*L_p$ splits as the join of the sublinks $L_i$ of simplices $\tau$ with $\s_{i-1}<\tau<\s_{i}$. Now we can write $\s_{i}=\s_{i-1}*\Delta^{m_{i}-m_{i-1}-1}$ and $L_i$ identifies with
$\del \Delta^{m_{i}-m_{i-1}-1}$, as we can add any strict face of this extra simplex to $\s_{i-1}$. So $L_i$  is $(m_{i}-m_{i-1}-3)$--connected. 



Finally the up-link $L_u$  identifies with the first barycentric subdivision of the link of $\s_p$ in~$X$, since larger simplices are automatically large enough, and any larger simplex is obtained by joining a simplex from the link.  It is hence $(n-m_p-1)$--connected by our assumption on $X$. Hence the whole link has connectivity at least
$$(m_0-m)+\sum_{i=1}^p(m_i-m_{i-1}-1)+(n-m_p+1)-2=n-m-p-1$$
as needed. 
\end{proof}

The wCM property will allow us to use \cite[Thm~2.4]{Galatius-RW}, which tells us that when we use the connectivity of $X_m$ to extend a map $f:S^k\to X_m$ from sphere $S^k$ to a disc $D^{k+1}$, we can do this in such a way that the map is simplexwise injective on the new simplices.

\medskip

For $\s$ a $p$-simplex of $\Rdouble{X}$, we will denote $\delta(\s)$ the underlying $((p+1)r-1)$-simplex of~$X$.

\begin{lemma}    \label{lem:doublelink}
For $\tau$ a $p$-simplex of $\Rdouble{X}$, 
\[
\Link_{\Rdouble{X}}\tau\cong\Rdouble{\Link_{X} \decouple{\tau}}. 
\]
\end{lemma}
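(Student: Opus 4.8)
The plan is to exhibit an explicit simplicial isomorphism by first pinning down a bijection on vertex sets and then verifying it preserves the simplex relation in both directions; since a simplicial complex is determined by its vertices together with the collection of subsets that are simplices, this is enough.

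First I would unwind both sides on vertices. A vertex of $\Link_{\Rdouble{X}}\tau$ is an $(r-1)$-simplex $\sigma$ of $X$ (that is, a vertex of $\Rdouble{X}$) which is disjoint from $\tau$ in $\Rdouble{X}$ and spans an edge with it. By Definition~\ref{def:rdouble} this edge condition reads $\sigma\cup\decouple{\tau}\in X_{(p+2)r-1}$, and since $\decouple{\tau}$ already has $(p+1)r$ vertices, attaining this cardinality forces $\sigma$ to be disjoint from $\decouple{\tau}$ with $\sigma\cup\decouple{\tau}$ a simplex of $X$. This is precisely the condition for $\sigma$ to be an $(r-1)$-simplex of $\Link_{X}\decouple{\tau}$, i.e.\ a vertex of $\Rdouble{\Link_{X}\decouple{\tau}}$. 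Thus the identity-on-underlying-subsets assignment $\sigma\mapsto\sigma$ is the candidate isomorphism.

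Next I would compare simplices. A collection $\{\sigma_0,\dots,\sigma_q\}$ of such vertices is a $q$-simplex of $\Link_{\Rdouble{X}}\tau$ iff $\{\sigma_0,\dots,\sigma_q,\tau_0,\dots,\tau_p\}$ is a $(q+p+1)$-simplex of $\Rdouble{X}$, which by definition means
\[
\textstyle\bigcup_{j}\sigma_j\ \cup\ \decouple{\tau}\ \in\ X_{(q+p+2)r-1}.
\]
The same collection is a $q$-simplex of $\Rdouble{\Link_{X}\decouple{\tau}}$ iff $\bigcup_{j}\sigma_j\in(\Link_{X}\decouple{\tau})_{(q+1)r-1}$, i.e.\ $\bigcup_{j}\sigma_j$ is disjoint from $\decouple{\tau}$, has cardinality $(q+1)r$, and $\bigcup_{j}\sigma_j\cup\decouple{\tau}$ is a simplex of $X$. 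The core of the argument is the bookkeeping showing these two conditions coincide: because $\decouple{\tau}$ contributes a fixed set of $(p+1)r$ vertices, the union $\bigcup_{j}\sigma_j\cup\decouple{\tau}$ reaches the full cardinality $(q+p+2)r$ exactly when the $\sigma_j$ are pairwise disjoint, disjoint from $\decouple{\tau}$, and jointly of cardinality $(q+1)r$ — which is the same disjointness-and-cardinality data recorded on the $\Rdouble{\Link_{X}\decouple{\tau}}$ side.

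I expect the only genuine obstacle to be this cardinality accounting, and in particular being careful that the phrase ``is a simplex of $\Rdouble{(-)}$'' silently encodes pairwise disjointness through the requirement that the relevant union attain maximal cardinality. Once that point is made explicit, the equivalence of the two simplex conditions is immediate, the vertex bijection is automatically compatible with faces (being the identity on underlying subsets of $X_0$), and we obtain the claimed isomorphism $\Link_{\Rdouble{X}}\tau\cong\Rdouble{\Link_{X}\decouple{\tau}}$.
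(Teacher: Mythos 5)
Your proof is correct and follows essentially the same route as the paper's: both arguments simply unwind Definition~\ref{def:rdouble} to show that a collection $\{\sigma_0,\dots,\sigma_q\}$ joins with $\tau$ to a simplex of $\Rdouble{X}$ if and only if $\bigcup_j\sigma_j$ is a simplex of $\Link_X\decouple{\tau}$ of the right dimension. The paper states this more tersely (as the chain $\nu\in\Link_{\Rdouble{X}}\tau \Leftrightarrow \delta(\nu)*\delta(\tau)\in X \Leftrightarrow \delta(\nu)\in\Link_X\decouple{\tau} \Leftrightarrow \nu\in\Rdouble{\Link_X\decouple{\tau}}$), while you make the cardinality and disjointness bookkeeping explicit, which is a fair elaboration rather than a different argument.
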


\begin{proof}
 By definition, a simplex $\nu$ is in $\Link_{\Rdouble{X}}\tau$ if and only if $\nu*\tau$ is a simplex of $\Rdouble X$, which happens, again by definition, if and only if
  $\delta(\nu)*\delta(\tau)$ is a simplex of $X$. This can be reformulated as saying that $\delta(\nu)$ is in $\Link_{X} \decouple{\tau}$, which happens if and only if $\nu\in \Rdouble{\Link_{X} \decouple{\tau}}$. 
\end{proof}

\begin{proof}[Proof of Theorem~\ref{thm:kdouble}]
We will show that 
\begin{equation*}
\Rdouble X \textrm{ is } \frac{n-2r}{r+1}=(\frac{n-r+1}{r+1}-1) \textrm{-connected for any wCM complex $X$ of dimension $n$.}
\end{equation*}
This connectivity result gives the desired wCM property since, by Lemma~\ref{lem:doublelink}, if $\tau$ is a $p$-simplex of $\Rdouble{X}$, then 
$\Link_{\Rdouble{X}}\tau\cong\Rdouble{\Link_{X} \decouple{\tau}}$. Now if $X$ is wCM of dimension $n$, then  $\Link_{X} \decouple{\tau}$ is wCM of dimension $n-r(p+1)-1$. Hence we can apply the connectivity result to  $\Link_{X} \decouple{\tau}$, which gives that the link is at least $\frac{n-r(p+1)-1-2r}{r+1}$-connected. Finally we check that
$$\frac{n-r(p+1)-1-2r}{r+1}=\frac{n-rp-3r-1}{r+1}\ge \frac{n-rp-3r-1-p}{r+1}=\frac{n-r+1}{r+1}-p-2$$
giving the required connectivity condition on links. 

\medskip
  
So fix a simplicial complex $X$ that is wCM of dimension $n$ and let $k\le \frac{n-2r}{r+1}$. Then $(r+1)k\le n-2r$, or equivalently, $k\le n-r(k+2)$. 

Let $f:S^k\to \Rdouble{X}$ be a map, which we assume simplical for some triangulation $K$ of~$S^k$. The plan of the proof is as follows: 
\begin{itemize}
\item[step 1] Use $f$ to construct a map $\hat f:K_1 \to X_{(k+2)r}$, for $K_1$ a subdivision of $K$. By   [HW, Lem 3.8], we know that $\hat f$ extends to a map $g:D^{k+1}\to X_{(k+2)r}$, simplicial with respect to a triangulation $L_1$ of the disc restricting to $K_1$ on the boundary. We will choose $g$ to be simplexwise injective. 
\item[step 2] Use $g$ to construct a map $h:L_2\to \Rdouble{X}$, for $L_2$ a subdivision of $L_1$, with the property that $h|_{S^k}\simeq f$. 
\end{itemize}

We start with step 1. We will build $\hat f$ and $K_1$ inductively on the skeleta of the first barycentric subdivision $sK$ of $K$, with the following property: 
\begin{itemize}
\item[$(*)$] For each vertex $v$ of $K_1$ in the interior of a simplex $\s_0<\s_1<\dots<\s_p$ of $sK$, we have that $\delta(f(\s_0))\le \hat f(v)$. 
\end{itemize}
We start with the vertices of $sK$. Such a vertex corresponds to a simplex $\s$ of $K$. 
Because $X$ is wCM of dimension $n\ge k+r(k+2)$, we can, if the dimension of $\delta(f(\s))$ is not large enough, pick a $((k+2)r-1)$--simplex $\hat f(\s)$ containing $\delta(f(\s))$ as a face, as any simplex in a wCM complex of dimension $n$ is the face of a simplex of dimension $n$. The injectivity condition is automatically satisfied on vertices. 

Now suppose that we have defined $\hat f$ on the $(m-1)$--skeleton of $sK$ simplexwise injective and satisfying $(*)$. Let $\underline\s=(\s_0<\dots<\s_m)$ be an $m$--simplex of $sK$. By induction, we have thus defined $\hat f$ on a subdivision of the boundary $\del\underline\s$, so that  the image of $\hat f$ on any vertex always contains at least $\delta f(\s_0)$ as a face. Now $\s_0$ is a $p$--simplex of $K$ for some $p\le k$, with $f(\s_0)$ a $p'$--simplex of $\Rdouble{X}$ for some $p'\le p$, so that $\delta f(\s_0)$ is a simplex of $X$ with $r(p'+1)$ vertices for some $p'\le p\le k$. Note that
$r(p'+1)\le r(k+1)<r(k+2)$. We can thus consider the restriction of $\hat f$ to  $\del\underline\s$ as a map
$$\hat f|_{\del \underline\s}=\delta f(\s_0)*\hat f_1:  \del\underline\s\simeq S^{m-1}\ \longrightarrow\ X_{r(k+2)}$$
with $\hat f_1: \del \underline\s\to \Link_X(\delta f(s_0))_{r(k+2)-r(p'+1)}$. This latter complex is wCM of dimension $n-r(k+2)+r(p'+1)+1$ by Lemma~\ref{lem:wCM}, so we can extend $\hat f_1$ to a map  $\hat f_2: D^m\to \Link(\delta f(\s_0)) _{r(k+2)-r(p'+1)}$ in a simplexwise injective way as long as $m-1\le n-r(k+2)+r(p'+1)$, which holds since $m-1\le k-1\le n-r(k+2)-1$ by assumption. 
Now define $\hat f=\delta f(\s_0) * \hat f_2$ in the interior of the simplex. 
This satisfies condition $(*)$ by construction, is still simplexwise injective, and hence gives the induction step, once this is done for all the $m$ simplices of~$sK$. 

\smallskip

By assumption $k\le n-r(k+2)$, so that \cite[Lem 3.8]{Hatcher-Wahl} gives a null-homotopy  $g:D^{k+1}\to X_{(k+2)r}$ of $\hat f$, simplicial with respect to a triangulation~$L_1$, agreeing with $K_1$ in the boundary of the disc. By Lemma~\ref{lem:wCM} and using \cite[Thm~2.4]{Galatius-RW}, we can assume that $g$ is simplexwise injective.

For step 2, we want to use $g$ to construct a map $\hat g:D^{k+1}\to \Rdouble{X}$, with $D^{k+1}$ now triangulated by a subdivision $L_2$ of~$L_1$.  
We build $\hat g$ inductively on the skeleta of $L_1$, satisfying that 
\begin{itemize}
\item[$(**)$] For each vertex $v$ of $L_2$ in the interior of a simplex $\tau$ of $L_1$ with $g(\tau)=\alpha_0<\dots<\alpha_p$ in $X_{(k+2)r}$, we have that $\hat g(v)$ is an $(r-1)$-face of $\alpha_p$. And if $\tau$ is in the boundary, lying in the interior of the simplex $\s_0<\dots<\s_j$ of $sK$, we require more specifically that $\hat g(v)=f(w)$ for $w$ a vertex of~$\s_0$.  
\end{itemize}
Note that the condition on the boundary is compatible with the condition on all simplices by condition $(*)$: if $v$ is in the interior of a simplex $\tau$ of $L_1$ and  in the interior of the simplex $\s_0<\dots<\s_j$ of $sK$, then the vertices of $\tau$ are necessarily interior to faces of $\s_0<\dots<\s_j$, and hence by $(*)$, each $\alpha_i$ contains at least $\delta f(\s_0)$ as a face, so faces of  $\delta f(\s_0)$ are also faces of $g(\alpha_p)$. 

To start the induction, let $v$ be a vertex of $L_1$. If $v\in K_1$, we set $\hat g(v)=f(w)$ with $w$ chosen as in $(**)$, and if not, we pick $\hat g(v)$ to be some $(r-1)$-face of the simplex $g(v)$, which is possible as $(k+2)r \ge r$. This satisfies $(**)$ by construction.

Note that this choice of $\hat g$ on the vertices of $K_1\subset L_1$ extends linearly to a map
$$\hat g|_{K_1}:K_1\rar \Rdouble{X}$$
since any simplex $\tau=\{v_0,\dots,v_j\}$ of $K_1$ lies inside a simplex  $\s_0<\dots<\s_j$ of $sK$, and $\hat g(v_i)=f(a_{s_i})$ for $a_{s_i}$ some vertex of $\s_j$. For the same reason, the map $\hat g|_{|K_1|}$ is homotopic to $f$, by linear interpolation.

\smallskip

Now suppose that we have defined $\hat g$ on the $(m-1)$--skeleton of $L_1$ satisfying $(**)$ and consider $\tau$ an $m$--simplex of $L_1$, not in the boundary, 
with $g(\tau)=\alpha_0<\dots<\alpha_m$ in $X_{2k+4}$, where we use that $g$ is simplexwise injective. 
By induction, we have defined $\hat g$ on its boundary so that the value of $\hat g$ on every vertex of $\partial \tau$ is an $(r-1)$-face of $\alpha_m$. Hence we have 
$$\hat g|_{\partial \tau}: \del \tau\simeq S^{m-1}\to \Rdouble{\alpha_m}.$$
Now $\alpha_m$ is a simplex of $X$ of dimension at least $(k+2)r+m-1$ since $\alpha_0$ has at least $(k+2)r$ vertices.

We have that $\Rdouble{\alpha_m}$ is $\left(\frac{(k+2)r+m-r}{r+1}-1\right)$--connected by Theorem~\ref{thm:Delta}. Now $m\le k+1$, so
$$\frac{(k+2)r+m-r}{r+1}-1\ge \frac{mr+r+m-r}{r+1}-1=m-1,$$ as needed.
So we can extend $\hat g$ in the interior of $\tau$, satisfying $(**)$ by construction. This finishes the proof.
\end{proof}





\section{Doubling and destabilization complexes}\label{sec:stability}

The $r$-tupling of a simplicial complex arises in the context of homological stability, when ``stabilizing fast'' in the way described in the introduction. We make this precise here. We start by recalling the destabilization complexes of \cite{RW-Wahl}. To do so, we will adopt for this section the categorical language of that paper, where groups are considered as automorphism groups in appropriate categories, and stabilization maps are induced by a monoidal structure in the category.

\medskip

Let  $(\Ccal,\oplus,0)$ be a monoidal category, and $A,X$ objects in $\Ccal$. We will consider groups of the form $G_n=\Aut(A\op X^{\op n})$ with stabilization maps of the form
\begin{equation}\label{eq:stab}
\s_1:   G_n=\Aut(A\oplus X^{\op n})\ \xrightarrow{\op X}\ G_{n+1}=\Aut(A\oplus X^{\op n+1})
  \end{equation}
  adding the identity on the added $X$-summand.
  For example symmetric groups $\Sigma_n$ can be interpreted as automorphisms in the category of finite sets and injections, with disjoint union as monoidal structure.
  We refer to \cite[Sec 5]{RW-Wahl} for further examples including general linear groups, unitary groups, automorphisms of free groups or mapping class groups. See also \cite[sec 1.1]{RW-Wahl} for how to construct an appropriate category $\Ccal$ from the groups $\{G_n\}_{n\ge 1}$, a ``sum'' $G_n\times G_m\to G_{n+m}$ and a braiding $b_{n,m}:G_{n+m}\to G_{m+n}$. 

 \medskip

  When $0$ is initial in $\Ccal$, we can associate to the pair of objects $(A,X)$ a semi-simplical set, and underlying simplicial complex: 
\begin{definition}\cite[Defn 2.1 and 2.8]{RW-Wahl}\label{def:WS}
Let $(\Ccal,\oplus,0)$ be a monoidal category with $0$ initial and $(A,X)$ a pair of objects in~$\Ccal$.
\begin{enumerate}
\item Define $W_{n}(A,X)_{\bullet}$ as the semi-simplicial set with $p$-simplices  $$W_n(A,X)_p=\Hom_{\Ccal}\left(X^{\oplus p+1},A\oplus X^{\oplus n}\right),$$ and face maps $d_i$ precomposing with $X^{\op i}\op \iota_X\op X^{\op p-i}$ for $\iota_X:0\to X$ the unique map.
\item Define $S_{n}(A,X)$ to be the simplicial complex with set of vertices $W_{n}(A,X)_{0}$ and such that $\{f_0,\dots,f_p\}$ forms a $p$-simplex if there exists a $p$-simplex $f\in W_n(A,X)_p$ with
  set of vertices $\{f_0,\dots,f_p\}$. 
\end{enumerate}
\end{definition}

A $p$-simplex of a semi-simplicial set has $p+1$ vertices, obtained from applying repeated face maps, but, unlike in a simplicial complex, the vertices of a simplex might not be distinct and might not determine the simplex.

For a simplex $f:X^{\op p+1}\to A\op X^{\op n}$ in $W_n(A,X)$, the vertices of $f$ are its restrictions
$$f_i=f\circ(\iota_{X^{\op i-1}}\op X\op \iota_{X^{\op n-i}}):X\to A\op X^{\op n}$$ to each of the $X$-summands of the source. 
We will here stay away from pathological examples and restrict ourselves to categories that are {\em locally standard at $(A,X)$} in the sense of \cite{RW-Wahl}. This is precisely the categories with the property that simplices of $W_n(A,X)$ have all their vertices distinct, and are determined by the ordered collection of their vertices, see \cite[Prop 2.6]{RW-Wahl}. 

For a simplicial complex $X$, denote $X^{ord}$ the semi-simplicial set with a $p$-simplex for every $p$-simplex of $X$ and every choice of ordering of its vertices. 
In practice, the following two situations arise in examples: 
\begin{enumerate}
\item[(A)] $W_n(A,X)=S_n(A,X)^{ord}$ (when the groupoid $\mathcal{G}$ is symmetric monoidal) 
\item[(B)] $W_n(A,X)=S_n(A,X)$ (when the groupoid $\mathcal{G}$ is braided but not symmetric).  
\end{enumerate}

\begin{examples}\label{ex:FI} Let $(\Ccal,\op,0)=(FI,\sqcup,\emptyset)$ be the category of finite sets and injections, with disjoint union as monoidal structure, $A=\emptyset$ and $X=\{*\}$. 
Then 
\begin{enumerate}
\item $W_n(A,X)$ is the semi-simplicial set of {\em injective words} in $[n]=\{1,\dots,n\}$, with $p$-simplices
  $$W_n(A,X)_p=\textrm{Hom}_{FI}([p+1],[n])=\{(a_0,\dots,a_p)\in [n]^{p+1}\ |\ a_i\neq a_j \textrm{ when } i\neq j\}.$$ 
\item $S_n(A,X)$ is isomorphic to the $(n-1)$-simplex $\Delta^{n-1}$. 
\end{enumerate}
Here we are in situation (A), with $W_n(A,X)=S_n(A,X)^{ord}$. 
\end{examples}

The semi-simplicial set $W_n(A,X)$ is called the {\em destabilization complex}, and Theorem~A in \cite{RW-Wahl} says that, under good conditions, its connectivity ensures homological stability for the maps~\eqref{eq:stab}. Moreover, the connectivity of the simplicial complexes $S_n(A,X)$ often determines that of the semi-simplicial sets $W_n(A,X)$, see \cite[Thm~2.10]{RW-Wahl}.

\smallskip

Given $(A,X)$ in $(\Ccal,\op,0)$ as above, we can also consider the stabilization maps
\begin{equation}\label{eq:rstab}
 \s_r:  \Aut(A\oplus X^{\op n})\ \xrightarrow{\op X^{\op r}}\ \Aut(A\oplus X^{\op n+r})
  \end{equation}
  that go $r$ times faster. Homological stability for this fast stabilization is ruled by the semi-simplicial sets $W_n(A,X^{\op r})$. 
  Because homological stability for the maps stabilizing one $X$ at a time implies homological stability for the maps stabilizing $r$ $X$'s at once, it is to be expected that the connectivity of the semi-simplicial sets $W_n(A,X)$ should imply a connectivity result for the semi-simplicial sets $W_n(A,X^{\op r})$. We will see that this is the case, under good conditions, by first using that the connectivity properties of $W_n(A,X)$ and $W_n(A,X^{\oplus r})$ are tightly related to those of $S_n(A,X)$ and $S_n(A,X^{\oplus r})$, and then relating the latter simplicial complexes to the $r$-tupling construction on the former. Understanding what happened to these complexes under fast stabilization was our motivation for proving Theorem~\ref{thm:kdouble}. 

  \smallskip

Recall from \cite[Def 3.2]{Hatcher-Wahl} that a simplicial complex $Y$ is a complete join over a simplicial complex $X$ if there is a surjective map of simplicial complexes $\pi:Y\to X$ such that $\langle y_0,\dots,y_p\rangle$ is a  $p$-simplex of $Y$ if and only if
$\langle \pi(y_0),\dots,\pi(y_p)\rangle$ is a  $p$-simplex of $X$. This last condition can be reformulated as saying that 
if $\s=\langle x_0,\dots,x_p\rangle$ is a $p$-simplex of $X$, then $\pi^{-1}(\s)$ is the join $\pi^{-1}(x_0)*\dots*\pi^{-1}(x_p)$, which explains the name {\em complete join complex}.

\begin{proposition}\label{prop:Sjoin} Let $(\Ccal,\op,0)$ be a monoidal category with $0$ initial, locally standard at $(A,X)$. Then 
$S_{n}(A,X^{\oplus r})$ is a complete join complex over $\Rdouble{S_{nr}(A,X)}$.
\end{proposition}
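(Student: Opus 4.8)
The plan is to construct the projection $\pi$ on vertices and then verify the defining condition of a complete join complex in both directions. Everything rests on the set-level identity
\[
W_n(A,X^{\op r})_p=\Hom_{\Ccal}\!\left(X^{\op (p+1)r},A\op X^{\op nr}\right)=W_{nr}(A,X)_{(p+1)r-1},
\]
coming from $(X^{\op r})^{\op p+1}=X^{\op (p+1)r}$ and $(X^{\op r})^{\op n}=X^{\op nr}$. In particular a vertex $f$ of $S_n(A,X^{\op r})$ is a morphism $X^{\op r}\to A\op X^{\op nr}$, that is, an $(r-1)$-simplex of $W_{nr}(A,X)$; since $(A,X)$ is locally standard, the $r$ vertices of $f$ are distinct and span an $(r-1)$-simplex of $S_{nr}(A,X)$, which is by definition a vertex of $\Rdouble{S_{nr}(A,X)}$. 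I set $\pi(f)$ to be this underlying simplex. Surjectivity of $\pi$ is immediate, since any $(r-1)$-simplex of $S_{nr}(A,X)$ is realised by some morphism $X^{\op r}\to A\op X^{\op nr}$, which is a vertex of $S_n(A,X^{\op r})$ mapping to it.

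For the forward implication, suppose $\{f_0,\dots,f_p\}$ is a $p$-simplex of $S_n(A,X^{\op r})$, realised by some $F\in W_n(A,X^{\op r})_p$ whose $i$-th block restriction is $f_i$. Reading $F$ through the identity above as a $((p+1)r-1)$-simplex of $W_{nr}(A,X)$, local standardness at $(A,X)$ forces its $(p+1)r$ coordinate-vertices to be distinct. Hence the $\pi(f_i)$ are pairwise disjoint and their union is exactly the $((p+1)r-1)$-simplex of $S_{nr}(A,X)$ underlying $F$. By the definition of $r$-tupling this says precisely that $\{\pi(f_0),\dots,\pi(f_p)\}$ is a $p$-simplex of $\Rdouble{S_{nr}(A,X)}$; in particular $\pi$ is simplicial.

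The backward implication is the heart of the matter, and the step I expect to be the main obstacle. Here I am given vertices $f_0,\dots,f_p$ of $S_n(A,X^{\op r})$ whose underlying simplices $\tau_i=\pi(f_i)$ are pairwise disjoint with union a genuine $((p+1)r-1)$-simplex $\sigma$ of $S_{nr}(A,X)$, and I must produce a single realising morphism $F\colon X^{\op (p+1)r}\to A\op X^{\op nr}$ whose $i$-th block restriction is exactly the prescribed $f_i$. The existence of \emph{some} realiser of $\sigma$ is guaranteed by $\sigma$ being a simplex; the real content is that it can be chosen compatibly with the block decomposition and with the given internal orderings of the $f_i$. This ``block assembly'' is precisely the kind of statement governed by local standardness: the realisers of $\sigma$ differ by the (braided) action reshuffling the summands of $X^{\op (p+1)r}$, and I would invoke the characterisation of $W_{nr}(A,X)$ in \cite[Prop 2.6]{RW-Wahl} to arrange a realiser whose coordinate-vertices read off, block by block, as the ordered vertices of $f_0,\dots,f_p$. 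In the symmetric case this is transparent, since then every ordering of the vertices of $\sigma$ is realised and one simply concatenates the orderings of the $f_i$; the general locally standard case is where the structural description of the simplices of $W_{nr}(A,X)$ must be used carefully. Once $F$ is produced, its block restrictions agree with the $f_i$ because a simplex is determined by its ordered vertices, so $\{f_0,\dots,f_p\}$ is a $p$-simplex of $S_n(A,X^{\op r})$; together with the forward implication and surjectivity this yields the complete join structure, with fibres $\pi^{-1}(\tau)$ the set of realisers (orderings) of $\tau$.
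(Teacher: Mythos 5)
Your proof follows essentially the same route as the paper's: the same identification $W_n(A,X^{\op r})_p\cong W_{nr}(A,X)_{(p+1)r-1}$, the same projection $\pi$ sending a vertex to its underlying $(r-1)$-simplex of $S_{nr}(A,X)$, and the same verification that simplices upstairs correspond exactly to simplices of the $r$-tupling downstairs. The only difference is one of emphasis: you spell out the backward (``block assembly'') implication via reordering of realisers and local standardness, whereas the paper compresses that direction into a single ``if and only if''; your version is the more careful of the two.
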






\begin{proof}
By definition, a vertex of $S_{n}(A,X^{\oplus r})$ is a morphism $X^{\op r} \to A\op (X^{\op r})^{\op n}$ in $\Ccal$, which can be interpreted as an $(r-1)$-simplex of $W_{nr}(A,X)$. On the other hand, a vertex of $\Rdouble{S_{nr}(A,X)}$ is an $(r-1)$-simplex of $S_{nr}(A,X)$, that is a collection $\{f_1,\dots,f_r\}$ of morphisms $f_i: X \to A\op X^{\op n}$ such that there exists $f\in W_{nr}(A,X)$ in $\Ccal$ with vertices $f_i$, i.e. such that $f: X^{\op r} \to A\op (X^{\op r})^{\op n}$ restricts to $f_i$ on the $i$th factor. 
 In particular, we get a surjective map on vertex sets 
\[
\pi: S_{n}(A,X^{\oplus r})_0\cong W_{nr}(A,X)_{r-1}\ \longrightarrow\ S_{nr}(A,X)_{r-1}\cong \Rdouble{S_{nr}(A,X)}_0.
\]
taking a vertex of $S_{n}(A,X^{\oplus r})$, thought of as a simplex $\s$ of $W_{nr}(A,X)$, to its underlying simplex in  $S_{nr}(A,X)$.

Now a collection of vertices $\{g_0,\dots,g_p\}$ in $S_{n}(A,X^{\oplus r})$ defines a simplex if and only if there is a simplex $g: (X^{\op r})^{\op p+1} \to A\op (X^{\op r})^{\op n}$ in $W_{n}(A,X^{\oplus r})$ appropriately restricting to $g_0,\dots,g_p$. Interpreting $g$ as an $(r(p+1)-1)$-simplex of $W_{rn}(A,X)$, it has vertices  the union of the vertices of the $g_i$'s. Hence  $\{g_0,\dots,g_p\}$ is a $p$-simplex of $S_{n}(A,X^{\oplus r})$ if and only if   the union of the vertices of the $g_i$'s form a simplex in $S_{nr}(A,X)$, which happens if and only if
$\{\pi(g_0),\dots,\pi(g_p)\}$ is a $p$-simplex of $\Rdouble{S_{nr}(A,X)}$, proving the claim. 
\end{proof}

To be able to use the above result to deduce a connectivity result for $S_n(A,X^{\op r})$ and $W_n(A,X^{\op r})$ from the connectivity result for $S_n(A,X)$, we need to know that the simplicial complexes $S_n(A,X)$ are wCM. This is a property that holds in all standard stability examples, and follows for example if the category $\Ccal$ is locally homogeneous and locally standard at $(A,X)$ satisfying condition (A) of \cite[Sec 2.1]{RW-Wahl}, see  \cite[Thm~2.10,Cor 2.13]{RW-Wahl}. These many conditions are for example automatically satisfied under the local standardness assumption if $\Ccal$ is build from a groupoid (in the sense of \cite[sec 1.1]{RW-Wahl} that is symmetric monoidal and satisfies cancellation, see Theorem~1.10 and Proposition 2.9 in that paper.


\begin{proposition}\label{prop:Sr} Let $(\Ccal,\op,0)$ be a monoidal category with $0$ initial, locally standard at $(A,X)$. 
Suppose that  $S_n(A,X)$ is wCM of dimension $\frac{n+k-a}{k}$ for all $n$. Then $S_{n}(A,X^{\oplus r})$ is wCM of dimension $\frac{nr+k(2-r)-a}{k(r+1)}$ for any $n,r\ge 1$. 

If moreover, $W_n(A,X)$ satisfies condition (A) or (B) as above (see also \cite[Sec 2.1]{RW-Wahl}), we have that 
    if $W_n(A,X)$ is $\frac{n-a}{k}$-connected for all $n$, implies that  $W_{n}(A,X^{\oplus r})$ is $\frac{nr+k(1-2r)-a}{k(r+1)}$--connected for any $n,r\ge 1$
  \end{proposition}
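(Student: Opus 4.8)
The plan is to assemble the proposition from three ingredients that are already in place: Proposition~\ref{prop:Sjoin}, which realizes $S_n(A,X^{\oplus r})$ as a complete join complex over $\Rdouble{S_{nr}(A,X)}$; Theorem~\ref{thm:kdouble}, applied to the complex $S_{nr}(A,X)$; and the dictionary between the simplicial complexes $S_n$ and the semi-simplicial sets $W_n$ recorded in \cite[Thm~2.10]{RW-Wahl}. Throughout I read a fractional wCM dimension as its floor.

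First I would treat the statement about $S_n(A,X^{\oplus r})$. Substituting $nr$ for $n$, the hypothesis makes $S_{nr}(A,X)$ wCM of dimension $N:=\lfloor\frac{nr+k-a}{k}\rfloor$, so Theorem~\ref{thm:kdouble} gives that $\Rdouble{S_{nr}(A,X)}$ is wCM of dimension $\lfloor\frac{N-r+1}{r+1}\rfloor$. Since $-r+1$ is an integer and $r+1$ a positive integer, the identity $\lfloor\frac{\lfloor x\rfloor+c}{m}\rfloor=\lfloor\frac{x+c}{m}\rfloor$ collapses the nested floors to $d:=\lfloor\frac{nr+k(2-r)-a}{k(r+1)}\rfloor$, which is exactly the dimension claimed for $S_n(A,X^{\oplus r})$. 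To transfer this across the complete join projection $\pi\colon S_n(A,X^{\oplus r})\to\Rdouble{S_{nr}(A,X)}$ of Proposition~\ref{prop:Sjoin}, I would invoke the complete join machinery of \cite{Hatcher-Wahl}: a complete join complex over an $m$-connected complex is $m$-connected, and the link of a simplex $\tau$ upstairs is again a complete join, now over the link of $\pi(\tau)$ downstairs. Since the link of a $p$-simplex in the base is $(d-p-2)$-connected, the corresponding link in $S_n(A,X^{\oplus r})$ is $(d-p-2)$-connected as well (the case $p=-1$ giving $(d-1)$-connectivity of the whole complex), so $S_n(A,X^{\oplus r})$ is wCM of dimension $d$, establishing the first half.

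For the second half I would feed this wCM bound into \cite[Thm~2.10]{RW-Wahl} for the pair $(A,X^{\oplus r})$. Conditions (A) and (B) are properties of the underlying symmetric, respectively braided, monoidal groupoid, and hence hold for $X^{\oplus r}$ whenever they hold for $X$: in case (B) one has literally $W_n(A,X^{\oplus r})=S_n(A,X^{\oplus r})$, while in case (A) the theorem identifies the connectivity of $W_n(A,X^{\oplus r})=S_n(A,X^{\oplus r})^{ord}$ with that of $S_n(A,X^{\oplus r})$. Either way a complex that is wCM of dimension $d$ is $(d-1)$-connected, and a direct computation gives $d-1=\frac{nr+k(1-2r)-a}{k(r+1)}$, the asserted connectivity. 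The hypothesis that $W_n(A,X)$ is $\frac{n-a}{k}$-connected is, via the same theorem, the $S$-to-$W$ shadow of the assumed wCM bound on $S_n(A,X)$, so the two halves of the proposition really rest on a single input.

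The step I expect to be the main obstacle is the complete join transport: one must confirm that a complete join inherits not merely connectivity but the full wCM condition, which hinges on the compatibility of taking links with the complete join structure. A secondary point requiring care is that local standardness and condition (A)/(B) genuinely transfer from $(A,X)$ to the $r$-fold object $(A,X^{\oplus r})$, so that Proposition~\ref{prop:Sjoin} and \cite[Thm~2.10]{RW-Wahl} are both legitimately applicable there; this is where the structural hypotheses on $\Ccal$ are used.
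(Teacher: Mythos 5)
Your argument matches the paper's proof: the first half is exactly Theorem~\ref{thm:kdouble} applied to $S_{nr}(A,X)$ combined with Proposition~\ref{prop:Sjoin} and the fact that a complete join complex over a wCM complex is wCM of the same dimension (this is precisely \cite[Prop 3.5]{Hatcher-Wahl}, which the paper cites rather than re-derives), and the second half is the same transfer between $S_n$ and $W_n$ via the ordered complex in case (A) and the identification in case (B). The points you flag as needing care (wCM transporting across the complete join, and conditions (A)/(B) applying to the pair $(A,X^{\oplus r})$) are handled in the paper by the citations to \cite[Prop 3.5]{Hatcher-Wahl} and \cite[Prop 2.14]{RW-Wahl}, so nothing further is needed.
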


  Note that the above assumptions are satisfied for basically all known examples, and in particular all the examples treated in \cite{RW-Wahl}. 

  \begin{proof}
For the first part of the proposition, the assumption gives that  $\Rdouble{S_{nr}(A,X)}$ is wCM of dimension $$\frac{nr+k-a-kr+k}{k(r+1)}=\frac{nr+k(2-r)-a}{k(r+1)}$$ by Theorem~\ref{thm:kdouble}. Combining   Proposition~\ref{prop:Sjoin} with \cite[Prop 3.5]{Hatcher-Wahl} then gives that  $S_{n}(A,X^{\oplus r})$ is wCM of the same dimension, proving the claim. 

For the second part, in case (A) we have $W_n(A,X)=S_n(A,X)^{ord}$ is the ordered complex of $S_n(A,X)$, with simplices all the possible orderings of the vertices of simplices of $S_n(A,X)$. Picking an ordering of the vertices of $S_n(A,X)$ defines a splitting of the forgetful map $W_n(A,X)\to S_n(A,X)$ so that $W_n(A,X)$ is  $\frac{n-a}{k}$-connected implies that the same holds for $S_n(A,X)$. Since we assumed that $S_n(A,X)$ is wCM, the converse holds by \cite[Prop 2.14]{RW-Wahl}. Hence the result in that case follows from the first part. 
And in case (B), $W_n(A,X)\cong S_n(A,X)$ so that the result follows directly from the first part in that case. 
    \end{proof}



    \begin{remark}[Stability bounds and sharpness of the connectivity of the destabilization complex]\label{rem:ranges}
      Note that the above result is not optimal from the point of view of homological stability. Indeed, under the goodness assumptions of \cite{RW-Wahl}, if the original destabilization complexes $W_n(A,X)$ are $\frac{n-a}{k}$--connected for some $a$, i.e.~slope $\frac{1}{k}$-connected, then Theorem~A in that paper implies that the map
      $$\s_r: H_i(\Aut(A\op X^{\op nr}))\to H_i(\Aut(A\op X^{\op nr+r}))$$ is an isomorphism for $i\le \min(\frac{1}{k},\frac{1}{2})nr+b$ for some constant $b$. Using the above result instead, we get that the complexes $W_n(A,X^{\op r})$, in good enough situations, are slope $\frac{r}{k(r+1)}$--connected. 
      Applying Theorem~A of \cite{RW-Wahl} to these complexes, we can deduce stability for the same maps $\s_r$. This however now gives an isomorphism range of the form
      $i\le \min(\frac{r}{k(r+1)},\frac{1}{2})n+b'$, which is a strictly worse slope when $\frac{1}{k}\le \frac{1}{2}$.

  In the case of symmetric groups, we actually know that the above connectivity results are sharp. Indeed, let $\Ccal$ be the category of finite sets and injections, with disjoint union as monoidal structure. Then $W_n(\emptyset,[1])$ is the complex of injective words (see Example~\ref{ex:FI}), which is known to be wCM of dimension $n-1$, a connectivity that cannot be improved, see e.g.~\cite[p 613]{farmer}. (But note that it is slope $1>\frac{1}{2}$ connected!)
  The connectivity of $W_n(\emptyset,[2])$ coming from the above result is also known to be sharp. Indeed, the connectivity of $\double{S_{2n}(\emptyset,[1])}$ given in Theorem~\ref{thm:Delta} is known to be sharp, see \cite[Thm~1.3,1.6]{ShaWac}. This implies that the same holds for  $S_n(\emptyset,[2])$, because the projection to  $\double{S_{2n}(\emptyset,[1])}$ exhibiting it as a join complex admits a splitting. Finally, there is likewise a splitting of the forgetful map  $W_n(\emptyset,[2])\to S_n(\emptyset,[2])$, choosing an ordering of the vertices. Hence the  connectivity slope $\frac{r}{r+1}=\frac{2}{3}$ for $W_n(\emptyset,[2])$ cannot be improved. As $\frac{2}{3}>\frac{1}{2}$, the stability result in this case would still give a stability bound of slope $\frac{1}{2}$, even if we use the double, and that is in fact known to be best possible homological stability slope for symmetric groups. More generally, if one knew that the connectivity of $W_n(\emptyset,[r])$ given in Theorem~\ref{thm:Delta} was likewise best possible, that would still give the correct stability slope for any $r$ because 
  $\min(\frac{r}{r+1},\frac{1}{2})=\frac{1}{2}$  for any~$r$. Of course this works precisely because the original destabilization complex $W_n(A,X)$ was slope~$1$ connected. 
 \end{remark}

\begin{remark}[Monoid cells point of view]\label{rem:cells}
 Following \cite[Sec 7]{krannich}, to construct  the destablization complex $W_n(A,X)$ it is enough to have a monoidal map from the monoid of braid groups $\mathcal B = \coprod_{n\ge 0}B_n$ to $\mathcal G=\coprod_{n\ge 0} G_n$, if the latter is a monoidal groupoid. This endows $B\mathcal{G}$ with the structure of $E_1$-module over the $E_2$-algebra on $B\mathcal{B}$.  In \cite[Rem 3.10]{MPPRW}, the authors consider the  effect on the destabilization complex of reversing the braiding in $\mathcal G$.  One can interpret the reversed complex as coming from precomposing the map $\mathcal B\to \mathcal G$ by the automorphism of $\mathcal B$ flipping the braid. The resulting complex is homeomorphic to the original one, via a homeomorphism that reverses the order of the face maps in the semi-simplicial sets. 
In this language, the construction presented here comes instead from pre-composing the map $\mathcal B\to \mathcal G$ with a self-map of the braided category 
that is multiplication by $r$ on the objects and takes $r$-fold copies of the strings in each braid. 
This was pointed out to us by Oscar Randal-Williams. 
\end{remark}


\bibliographystyle{plain}
\bibliography{LSW-bibliography}

\end{document}